\documentclass[reqno]{amsart}

\usepackage{tabu}
\usepackage{amssymb}
\usepackage{mathtools}
\usepackage{a4wide,amsmath}
\usepackage{mathrsfs}
\usepackage{amsthm}
\numberwithin{equation}{section}
\numberwithin{figure}{section}
\numberwithin{table}{section}
\usepackage{bbm}
\usepackage{subfig}
\usepackage{enumerate}
\usepackage[section]{placeins}
\usepackage{graphicx}		  
\usepackage{ifpdf}
\ifpdf
\DeclareGraphicsExtensions{.pdf,.eps,.jpg,.png}	
\usepackage[suffix=]{epstopdf}
\fi
\usepackage{xcolor}
\usepackage[utf8]{inputenc}
\usepackage{hyperref}
\hypersetup{hidelinks}

\long\def\MSC#1\EndMSC{\def\arg{#1}\ifx\arg\empty\relax\else
	{\narrower\noindent%
		{2020 Mathematics Subject Classification}: #1\\} \fi}
\long\def\PACS#1\EndPACS{\def\arg{#1}\ifx\arg\empty\relax\else
	{\narrower\noindent%
		{PACS numbers}: #1}\fi}
\long\def\KEY#1\EndKEY{\def\arg{#1}\ifx\arg\empty\relax\else
	{\narrower\noindent%
		Keywords: #1\\}\fi}

\newcommand{\Xint}[1]{\mathchoice
	{\XXint\displaystyle\textstyle{#1}}%
	{\XXint\textstyle\scriptstyle{#1}}%
	{\XXint\scriptstyle\scriptscriptstyle{#1}}%
	{\XXint\scriptscriptstyle\scriptscriptstyle{#1}}%
	\!\int}
\newcommand{\XXint}[3]{{\setbox0=\hbox{$#1{#2#3}{\int}$}
		\vcenter{\hbox{$#2#3$}}\kern-.5\wd0}}
\newcommand{\dashint}{\Xint-}


%
%
\theoremstyle{plain}
\newtheorem{theorem}{Theorem}[section]

\newtheorem{proposition}[theorem]{Proposition}

\theoremstyle{definition}

\newtheorem{assumption}[theorem]{Assumption}
\theoremstyle{remark}
\newtheorem{remark}[theorem]{Remark}

\newcommand{\norm}[1]{\lVert#1\rVert}
\newcommand{\abs}[1]{\lvert#1\rvert} 
\newcommand{\inner}[1]{\langle#1\rangle}

\newcommand{\redel}{\mathop{\textup{Re}}}

\newcommand{\dist}{\mathop{\textup{dist}}}
\newcommand{\essinf}{\mathop{\textup{ess\,inf}}}
\newcommand{\esssup}{\mathop{\textup{ess\,sup}}}

\newcommand{\suppm}{\mathop{\textup{supp}}}

\newcommand{\di}{\mathrm{d}}   

\begin{document}
	\title[Reconstruction of singular and degenerate inclusions]{Reconstruction of singular and degenerate inclusions in Calder\'on's problem}
	
	\author[H.~Garde]{Henrik Garde}
	\address[H.~Garde]{Department of Mathematics, Aarhus University, Ny Munkegade 118, 8000 Aarhus C, Denmark.}
	\email{garde@math.au.dk}
	
	\author[N.~Hyv\"onen]{Nuutti Hyv\"onen}
	\address[N.~Hyv\"onen]{Department of Mathematics and Systems Analysis, Aalto University, P.O. Box~11100, 00076 Helsinki, Finland.}
	\email{nuutti.hyvonen@aalto.fi}
	
	\begin{abstract}
		We consider the reconstruction of the support of an unknown perturbation to a known conductivity coefficient in Calder\'on's problem. In a previous result by the authors on monotonicity-based reconstruction, the perturbed coefficient is allowed to simultaneously take the values $0$ and $\infty$ in some parts of the domain and values bounded away from $0$ and $\infty$ elsewhere. We generalise this result by allowing the unknown coefficient to be the restriction of an $A_2$-Muckenhoupt weight in parts of the domain, thereby including singular and degenerate behaviour in the governing equation. In particular, the coefficient may tend to $0$ and $\infty$ in a controlled manner, which goes beyond the standard setting of Calder\'on's problem. Our main result constructively characterises the outer shape of the support of such a general perturbation, based on a local Neumann-to-Dirichlet map defined on an open subset of the domain boundary. 
	\end{abstract}	
	\maketitle
	
	\KEY
	Calder\'on's problem,
	electrical impedance tomography, 
	monotonicity method,
	inclusion detection,
	degenerate elliptic problem,
	singular elliptic problem.
	\EndKEY
	
	\MSC
	35R30, 35R05, 47H05.
	\EndMSC
	
	\section{Introduction} \label{sec:intro}
	
	Let $\Omega$ be a bounded Lipschitz domain in $\mathbb{R}^d$, $d\geq 2$, with connected complement and let $\Gamma\subseteq\partial\Omega$ be relatively open. For an appropriate nonnegative scalar-valued function $\gamma$, we consider the conductivity equation
	\begin{equation} \label{eq:conductivityeq}
		-\nabla\cdot(\gamma\nabla u) = 0 \quad\text{ in } \Omega.
	\end{equation}
    The task of determining $\gamma$ from Cauchy data of the solutions to \eqref{eq:conductivityeq} is called Calder\'on's inverse conductivity problem, which corresponds to the real-world imaging modality {\em electrical impedance tomography} (EIT); see~\cite{Borcea2002a,Borcea2002,Cheney1999,Uhlmann2009}. Instead of the general Calder\'on problem, this work concentrates on the simpler task of inclusion detection: Our aim is to reconstruct the outer shape of $\suppm(\gamma-\gamma_0)$ based on the local {\em Neumann-to-Dirichlet} (ND) map $\Lambda(\gamma)$ on $\Gamma$ for \eqref{eq:conductivityeq}, with $\gamma_0$ being a known background conductivity coefficient that belongs to
	\begin{equation*}
		L^\infty_+(\Omega) = \big\{ \varsigma\in L^\infty(\Omega;\mathbb{R}) \mid \essinf \varsigma > 0 \big\}
	\end{equation*}
	and satisfies a weak \emph{unique continuation principle} (UCP) in connection to the conductivity equation; cf.,~e.g.,~\cite[Definition~3.3]{Garde2020c}. The outer shape $D$ of $\suppm(\gamma-\gamma_0)$ refers to the smallest closed set in $\Omega$ having connected complement and satisfying $\suppm(\gamma-\gamma_0)\subseteq D$.
	
	In our analysis, the coefficient $\gamma$ may formally take values $0$ (perfectly insulating) and $\infty$ (perfectly conducting) in certain parts of $\Omega$; we refer to such regions as \emph{extreme inclusions}. If $\gamma$ is characterised by such extreme inclusions as well as finite positive and negative perturbations to $\gamma_0$, the monotonicity method has been proven to produce an exact reconstruction of the outer shape of $\suppm(\gamma-\gamma_0)$ under only mild geometric assumptions~\cite[Theorem~3.7]{Garde2020c}. However, this previous result requires that the restriction of $\gamma$ to the complement of the extreme inclusions belongs to $L^\infty_+$, thus only allowing (infinite) jump discontinuities at the boundaries of the extreme inclusions and no singular or degenerate behaviour elsewhere in $\Omega$.
	
	The purpose of this paper is to provide a simple extension to the proof of \cite[Theorem~3.7]{Garde2020c} so that it applies to a much larger class of unknown conductivity coefficients $\gamma$. To be more precise, we permit singular and degenerate behaviour by letting $\gamma$ be the restriction of an $A_2$-Muckenhoupt weight in parts of $\Omega$, which corresponds to a standard way of relaxing the assumptions on a coefficient in an elliptic partial differential equation \cite{Fabes1982,NLP}. Unlike in \cite[Theorem~3.7]{Garde2020c}, we also allow $\gamma$ to tend to $\gamma_0$ in a controlled manner rather than insisting on jump discontinuities at the boundary of $\suppm(\gamma-\gamma_0)$; see~\cite[Theorem~4.7]{Harrach13} for allowing such behaviour without extreme inclusions. Although there exist a few recent results on the unique solvability of the general Calder\'on problem with certain classes of singular and degenerate coefficients in two dimensions~\cite{Astala2016,Carstea2016,Nachman2020}, we are not aware of any previous extensions of inclusion detection methods to such frameworks. Moreover, we pose no restrictions on the spatial dimension $d \geq 2$ and only require local measurements.
	
	As a motivation for the use of $A_2$-coefficients, let us mention a couple of examples that are outside the standard setting for Calder\'on's problem but can still be tackled within our setting. An $A_2$-weight is allowed to locally behave as $\dist(\,\cdot\,,\Sigma)^s$ for any $s\in (-1,1)$ and Lipschitz hypersurface~$\Sigma$; see,~e.g.,~\cite[Lemma~3.3]{Duran2010}. In particular, such a hypersurface can be a subset of the boundary of an extreme inclusion, allowing a continuous decay to $0$ or a continuous growth to $\infty$ when approaching the inclusion boundary. Since even stronger singular or degenerate behaviour is possible for $A_2$-weights near a point, the coefficient $\gamma$ can also behave as $\dist(\,\cdot\,,x_0)^s$ close to an arbitrary $x_0 \in \Omega$ for any $s\in(-d,d)$.
	
	The main theoretical developments and related properties of monotonicity-based reconstruction in connection to Calder\'on's problem and EIT can be found in \cite{Garde2020c,Esposito2021,Garde_2019b,Harrach_2019,Harrach10,Harrach13,Ikehata1998a,Kang1997b,Tamburrino2002} for the continuum model and in \cite{GardeStaboulis_2016,Garde_2019,Harrach_2019,Harrach15} for  related practical electrode models. For an extensive list of references to papers employing similar monotonicity-based arguments in other inverse coefficient problems, we refer to \cite{Harrach_2019}. More general information on the theoretical aspects of Calder\'on's problem is available in the review article~\cite{Uhlmann2009} and the references therein.
	
	The rest of this article is organised as follows. Section~\ref{sec:main} introduces the main result on reconstruction of inclusions as Theorem~\ref{thm:general}; the required assumptions are summarised in Assumption~\ref{assump:D}. Section~\ref{sec:remarks} further elaborates on Assumption~\ref{assump:D}. Section~\ref{sec:forward} rigorously defines the Neumann problem for \eqref{eq:conductivityeq} and the associated local ND map in the context of a Muckenhoupt coefficient and extreme inclusions. Finally, Section~\ref{sec:proof} concludes the paper by proving Theorem~\ref{thm:general}.
	
	\section{Main result} \label{sec:main}
	
	The family of admissible inclusions is defined as 
	\begin{align*}
		\mathcal{A} &= \{ C \subset \Omega \mid C \text{ is the closure of an open set,}  \\
		&\hphantom{{}= \{C \subset \Omega \mid{}}\text{has connected complement,} \\
		&\hphantom{{}= \{C \subset \Omega \mid{}}\text{and has Lipschitz boundary } \partial C \}.
	\end{align*}
	Let $D\in\mathcal{A}$ be the set representing the inclusions in $\Omega$, that is, $D$ is (the outer shape of) the closure of the set where the investigated conductivity $\gamma$ differs from the known background conductivity $\gamma_0 \in L^\infty_+(\Omega)$. More precisely, $D$ is assumed to be composed of `negative' and `positive' parts as $D = D^- \cup D^+$,
	\begin{equation*}
		D^- = D_\textup{deg}\cup D_0 \cup D_\textup{F}^- \qquad \text{and} \qquad D^+ = D_\textup{sing} \cup D_\infty \cup D_\textup{F}^+,
	\end{equation*}
	where $D_\textup{deg}, D_\textup{sing}, D_0, D_\infty, D_\textup{F}^-, D_\textup{F}^+$ are mutually disjoint measurable sets, each of which may be empty or have multiple components with the exact conditions on their geometry given in Assumption~\ref{assump:D} below. 
	
	We define $\gamma$ as 
	\begin{equation*}
		\gamma = \begin{cases}
			0 & \text{in } D_0, \\
			\infty & \text{in } D_\infty, \\
			\gamma_\textup{deg} & \text{in } D_\textup{deg}, \\
			\gamma_\textup{sing} & \text{in } D_\textup{sing}, \\
			\gamma_\textup{F}^- & \text{in } D_\textup{F}^-, \\
			\gamma_\textup{F}^+ & \text{in } D_\textup{F}^+, \\
			\gamma_0 & \text{in } \Omega\setminus D.
		\end{cases}
	\end{equation*}
	Before introducing the exact assumptions on the different types of inclusions and the associated conductivities, we summarise that $D_{0}$ and $D_\infty$ correspond to the subsets of $\Omega$ where $\gamma$ is characterised by extreme inclusions and $D_{\textup{deg}}$ and $D_{\textup{sing}}$ to the subsets where $\gamma$ is allowed to be an $A_2$-Muckenhoupt weight. In the subsets $D_\textup{F}^\pm$ the coefficient $\gamma$ is bounded away from $0$ and $\infty$. However, it should be noted that in the nonextreme parts, $\gamma$ is only assumed to deviate from $\gamma_0$ near $\partial D$.

	Some implications and interpretations of the following assumptions are presented in Section~\ref{sec:remarks}.
	\begin{assumption} \label{assump:D}
		We assume the following about $D$ and $\gamma$.
		\begin{enumerate}[(i)]
			\item $\gamma \leq \gamma_0$ in $D^-$ and $\gamma \geq \gamma_0$ in $D^+$.
			\item $\gamma_\textup{F}^- \in L^\infty_+(D_\textup{F}^-)$ and $\gamma_\textup{F}^+ \in L^\infty_+(D_\textup{F}^+)$.
			\item\label{item:muckenhaupt} The sets $D_\textup{deg}$ and $D_\textup{sing}$ are compactly contained in the interior of $D$, and $\gamma_\textup{deg}$ and $\gamma_\textup{sing}$ are restrictions of an $A_2$-Muckenhoupt weight.
			\item \label{item:components} The sets $D_0, D_\infty, D_0\cup D_\textup{deg}\cup D_\textup{sing}$, and $D_\infty\cup D_\textup{deg}\cup D_\textup{sing}$ are closures of open sets with finitely many components and Lipschitz boundaries. Moreover, $D_0$ and $D_0\cup D_\textup{deg}\cup D_\textup{sing}$ have connected complements.
			\item\label{item:final} For every open neighbourhood $W$ of $x\in\partial D$, there exists a relatively open set $V\subset D$ that intersects $\partial D$, and $V\subset \widetilde{D} \cap W$ for one set $\widetilde{D}\in \{D_0, D_\infty, D_\textup{F}^-, D_\textup{F}^+\}$.
			\begin{enumerate}[(a)]
				\item If $\widetilde{D} = D_\textup{F}^-$, there exists an open ball $B\subset V$ such that $\esssup_{B}(\gamma_\textup{F}^- - \gamma_0) < 0$.
				\item If $\widetilde{D} = D_\textup{F}^+$, there exists an open ball $B\subset V$ such that $\essinf_{B}(\gamma_\textup{F}^+ - \gamma_0) > 0$.
			\end{enumerate}
		\end{enumerate}
	\end{assumption}
	
	Let $\Lambda(\gamma)$ be the local ND map on $\Gamma$ corresponding to the coefficient $\gamma$; see Section~\ref{sec:forward} for its precise definition, including an explanation on how extreme, singular, and degenerate inclusions are included in the associated elliptic Neumann boundary value problem. For $C\in\mathcal{A}$, we define $\Lambda_0(C)$ as the ND map with coefficient $0$ in $C$ and $\gamma_0$ outside $C$. In the same manner, we define $\Lambda_\infty(C)$ to be the ND map with coefficient $\infty$ in $C$ and $\gamma_0$ outside $C$.
	
	This prelude finally leads to the following general result on reconstructing inclusions from a local ND map.
	
	\begin{theorem} \label{thm:general}
		Let $\gamma$ and $D$ satisfy Assumption~\ref{assump:D} and $\gamma_0$ satisfy the UCP. For $C\in \mathcal{A}$, it holds
		\begin{equation*}
			D\subseteq C \qquad \text{if and only if} \qquad \Lambda_0(C)\geq \Lambda(\gamma) \geq \Lambda_\infty(C).
		\end{equation*}
		In particular, $D = \cap\{C\in\mathcal{A} \mid \Lambda_0(C)\geq \Lambda(\gamma)\geq \Lambda_\infty(C)\}$.
	\end{theorem}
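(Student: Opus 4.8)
The plan is to reprove the statement by the monotonicity method, following \cite[Theorem~3.7]{Garde2020c}; the crucial observation is that, by Assumption~\ref{assump:D}(iii), the $A_2$-Muckenhoupt sets $D_\textup{deg}$ and $D_\textup{sing}$ are compactly contained in the interior of $D$, so the singular and degenerate behaviour of $\gamma$ never reaches $\partial D$, whereas the reconstruction argument only probes $\gamma$ near $\partial D$. Throughout I use the properties of the Neumann problem and the local ND map supplied by Section~\ref{sec:forward} for a Muckenhoupt coefficient with extreme inclusions: well-posedness; the saddle-point characterisation $\langle\Lambda(\sigma)f,f\rangle_\Gamma=\sup_v\bigl(2\langle f,v\rangle_\Gamma-\int_\Omega\sigma\,\lvert\nabla v\rvert^2\,\di x\bigr)$ over functions $v$ admissible for the $\sigma$-problem; self-adjointness of $\Lambda(\,\cdot\,)$; the monotonicity $\sigma_1\le\sigma_2\Rightarrow\Lambda(\sigma_1)\ge\Lambda(\sigma_2)$, interpreted with the usual conventions at the values $0$ and $\infty$; and the ensuing monotonicity estimate
\begin{equation*}
	\langle(\Lambda(\sigma_2)-\Lambda(\sigma_1))f,f\rangle_\Gamma \;\ge\; \int_\Omega(\sigma_1-\sigma_2)\,\lvert\nabla u^f_{\sigma_1}\rvert^2\,\di x,
\end{equation*}
where $u^f_{\sigma_1}$ solves the $\sigma_1$-problem with datum $f$, valid whenever $u^f_{\sigma_1}$ is admissible for the $\sigma_2$-problem. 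The second ingredient is the localized-potentials construction of \cite{Garde2020c,Harrach_2019}: for a conductivity equal to $\gamma_0\in L^\infty_+(\Omega)$ off some $E\in\mathcal A$ and any open ball $B\subset\Omega$ with $\overline B\cap E=\emptyset$ lying in the component of $\Omega\setminus E$ that meets $\Gamma$, there are boundary currents $(f_k)$ whose potentials have Dirichlet energy blowing up on $B$ while being made arbitrarily small on any prescribed compact subset of $\overline D\setminus B$; only the behaviour $\gamma_0$ outside $E$ enters this construction.

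For the forward implication, assume $D\subseteq C$ and write $\gamma_{0,C}$ and $\gamma_{\infty,C}$ for the coefficients defining $\Lambda_0(C)$ and $\Lambda_\infty(C)$. Then $\Omega\setminus C\subseteq\Omega\setminus D$, so $\gamma=\gamma_0$ on $\Omega\setminus C$, while $0\le\gamma\le\infty$ on $C$; hence $\gamma_{0,C}\le\gamma\le\gamma_{\infty,C}$ almost everywhere in $\Omega$, and monotonicity of the ND map gives $\Lambda_0(C)=\Lambda(\gamma_{0,C})\ge\Lambda(\gamma)\ge\Lambda(\gamma_{\infty,C})=\Lambda_\infty(C)$.

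For the converse I argue by contraposition, so assume $D\not\subseteq C$. First, there is a point $x\in\partial D\setminus C$: if not, then $\partial D\subseteq C$, but $C$ is a compact subset of $\Omega$ with connected complement, so the open connected set $\Omega\setminus C$ is disjoint from $\partial D$ and contains points arbitrarily close to $\partial\Omega$, hence points of $\Omega\setminus D$; since $\Omega\setminus\partial D$ is the disjoint union of the open sets $\operatorname{int}(D)$ and $\Omega\setminus D$, connectedness forces $\Omega\setminus C\subseteq\Omega\setminus D$, i.e.\ $D\subseteq C$, a contradiction. Fix a neighbourhood $W$ of $x$ with $\overline W\cap C=\emptyset$ and apply Assumption~\ref{assump:D}(v): there is a relatively open $V\subset\widetilde D\cap W$ meeting $\partial D$, with $\widetilde D\in\{D_0,D_\infty,D_\textup{F}^-,D_\textup{F}^+\}$, and an open ball $B\subset V$ — handed by Assumption~\ref{assump:D}(v) directly when $\widetilde D=D_\textup{F}^\pm$, and extracted from $V\cap\operatorname{int}(D)$ when $\widetilde D\in\{D_0,D_\infty\}$ — such that $\overline B\cap C=\emptyset$ and, for some $c>0$, either $\gamma\le\gamma_0-c$ on $B$ (when $\widetilde D\in\{D_0,D_\textup{F}^-\}$, using $\gamma_0\in L^\infty_+$ or part~(a) of Assumption~\ref{assump:D}(v)) or $\gamma\ge\gamma_0+c$ on $B$ (when $\widetilde D\in\{D_\infty,D_\textup{F}^+\}$, using part~(b)). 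In the first case I contradict the inequality $\Lambda_0(C)\ge\Lambda(\gamma)$: feeding localized potentials that concentrate energy on $B$ into the monotonicity estimate — applied, as in \cite{Garde2020c}, with a comparison conductivity chosen so that the associated potential is admissible for the $\gamma$-problem — yields, for $k$ large enough, a current $f_k$ with $\langle(\Lambda(\gamma)-\Lambda_0(C))f_k,f_k\rangle_\Gamma>0$, the positive sign coming from the diverging $B$-contribution $\sim c\int_B\lvert\nabla u_k\rvert^2\,\di x$ outweighing the controlled remainder. In the second case the mirror argument contradicts $\Lambda(\gamma)\ge\Lambda_\infty(C)$. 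This proves the equivalence; since $D\in\mathcal A$, the equality $\{C\in\mathcal A\mid\Lambda_0(C)\ge\Lambda(\gamma)\ge\Lambda_\infty(C)\}=\{C\in\mathcal A\mid D\subseteq C\}$ then gives $D$ as the intersection of the left-hand family.

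The step I expect to be the main obstacle is this monotonicity-estimate/localized-potentials argument: the comparison conductivity must be chosen so that its potential is admissible for the $\gamma$-problem — constant on the components of $D_\infty$ and of finite weighted Dirichlet energy across $D_\textup{deg}\cup D_\textup{sing}$ — and the contributions to the estimate coming from $D\setminus B$ and from the extreme and degenerate sets must be absorbed by the smallness of the localized potentials there. This is precisely the bookkeeping carried out in \cite{Garde2020c}, and Assumption~\ref{assump:D}(iii)--(iv) is exactly what makes it persist in the present setting: the Muckenhoupt sets stay in the interior of $D$, away from $B$ and from $\Gamma$; the relevant weighted Dirichlet energies are finite by the forward theory of Section~\ref{sec:forward}; and the strict sign of the integrand on $B$ is provided by Assumption~\ref{assump:D}(i) together with~(v).
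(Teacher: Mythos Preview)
Your forward implication by direct monotonicity is fine and is essentially what the paper does in energy-functional form. The gap is in the converse. You correctly isolate the obstacle --- the comparison potential must lie in $\mathcal{H}(\gamma)$, hence have finite $\gamma$-weighted Dirichlet energy across $D_\textup{deg}\cup D_\textup{sing}$ --- but you do not resolve it. The claim that ``this is precisely the bookkeeping carried out in \cite{Garde2020c}'' is where the argument breaks: \cite{Garde2020c} never handles $A_2$-weighted energies, and the localised potentials it builds are only controlled in unweighted $H^1$. If some of $D_\textup{sing}$ lies outside $C$, the term $\int_{D_\textup{sing}\setminus C}\gamma\,\lvert\nabla u_k\rvert^2\,\di x$ appearing (with a negative sign) in your monotonicity estimate need not be finite, let alone small, so the estimate cannot be applied and the ``diverging $B$-contribution outweighs the remainder'' step is unjustified.

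The paper's proof supplies exactly the missing idea: rather than feed $\gamma$ into the localised-potentials machinery, it introduces auxiliary coefficients $\gamma_\textup{L}$ and $\gamma_\textup{U}$ obtained from $\gamma$ by replacing the $A_2$-region $D_\textup{deg}\cup D_\textup{sing}$ with $0$ and $\infty$, respectively. These have no Muckenhoupt part, so \cite{Garde2020c} applies to them verbatim, yielding (in Case~A) $\Lambda_0(C)\not\geq\Lambda(\gamma_\textup{U})$ and (in Case~B) $\Lambda(\gamma_\textup{L})\not\geq\Lambda_\infty(C)$. The only new analytic input is the sandwich $\Lambda(\gamma_\textup{L})\geq\Lambda(\gamma)\geq\Lambda(\gamma_\textup{U})$, which follows cleanly from the energy characterisation because $u_f^{\gamma_\textup{U}}$ has vanishing gradient on the $A_2$-region (hence lies in $\mathcal{H}(\gamma)$) and $u_f^{\gamma}$ restricts to $\mathcal{H}(\gamma_\textup{L})$. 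This reduction is what lets one avoid ever estimating a weighted Dirichlet energy of a localised potential; your sketch gestures toward such a choice of comparison conductivity but never makes it, and without it the contraposition does not close.
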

	It follows immediately from the proof of Theorem~\ref{thm:general} that if $D^+$ is empty, then only the inequality $\Lambda_0(C)\geq \Lambda(\gamma)$ needs to be considered, and if $D^-$ is empty, then only  $\Lambda(\gamma)\geq \Lambda_\infty(C)$ needs to be considered.
	
	\section{Some remarks on Assumption~\ref{assump:D}} \label{sec:remarks}
	
	
	Let us present a few remarks on Assumption~\ref{assump:D}:
	
	\begin{itemize}
        \item The condition \eqref{item:muckenhaupt} ensures that at $\partial D$ the coefficient $\gamma$ has a jump to $0$ or $\infty$, or a well-behaving finite transition characterised by $\gamma_\textup{F}^-$ or $\gamma_\textup{F}^+$. In particular, singular and degenerate behaviour of an $A_2$-weight is not permitted exactly at the boundary of $D$ but only in its interior.  
		\item However, \eqref{item:muckenhaupt} still allows $\gamma$ to exhibit singular and degenerate behaviour in the interior of $D$,~e.g.,~by approaching $0$ or $\infty$ with a limited rate near the extreme inclusions, certain hypersurfaces, curves, or points.
        \item The condition \eqref{item:components} is related to technical assumptions for a convergence result in~\cite{Garde2020c} for potentials and ND maps in connection with extreme inclusions.
	    \item The condition \eqref{item:final} excludes certain pathological cases such as $\gamma-\gamma_0$ changing its sign arbitrarily often everywhere near an open part of $\partial D$. This type of oscillating behaviour is, however, allowed near points on $\partial D$ if these points are separated by at least a fixed positive distance. Indeed, the condition \eqref{item:final} ensures that one can access $D$ everywhere along $\partial D$ through an open set that only intersects with one of the inclusion types.
        \item However, \eqref{item:final} still allows $\gamma$ to approach $\gamma_0$ in a controlled manner near some parts of the inclusion boundary, while also allowing finite jump discontinuities as has previously been considered in connection to extreme inclusions in \cite{Garde2020c}. The former could,~e.g.,~correspond to $\gamma$ equalling $\gamma_0$ on an open subset $\Sigma\subset \partial D$ but exhibiting a local strict increase or decrease inside $D$ with respect to the distance from $\Sigma$.  
	\end{itemize}
	
	\section{Forward problem with a Muckenhoupt coefficient} \label{sec:forward}
	
	In this section we consider the Neumann problem for~\eqref{eq:conductivityeq} when the coefficient is a restriction of an $A_2$-Muckenhoupt weight that is well-behaved near $\partial \Omega$ and allows perfectly insulating and perfectly conducting parts in $\Omega$. We denote the coefficient in the Neumann problem by $\sigma$ to distinguish it from the fixed coefficient $\gamma$ in \eqref{eq:conductivityeq}. This provides the means to introduce the corresponding local ND map $\Lambda(\sigma)$, which in turn defines the forward map $\sigma \mapsto \Lambda(\sigma)$ associated to the considered inverse problem. 
	
	We refer to \cite{Fabes1982,NLP} for an introduction to Muckenhoupt weights and weighted Sobolev spaces, although for weighted Poincar\'e inequalities we refer to results in \cite{Drelichman2008} since those are shown for more general domains. A nonnegative function $w$ on $\mathbb{R}^d$ is called an $A_2$-Muckenhoupt weight provided that $w$ and $1/w$ are locally integrable and satisfy
	\begin{equation*}
		\exists C>0, \, \forall B \text{ open ball in $\mathbb{R}^d$}: \biggl( \dashint_B w\,\di x \biggr)\biggl( \dashint_B \frac{1}{w}\,\di x \biggr) \leq C.
	\end{equation*}
	A common equivalent definition integrates over cubes rather than balls. 
	
	Let $C_0\Subset \Omega$ be such that $\widetilde{\Omega} = \Omega\setminus C_0$ is a Lipschitz domain; $C_0$ will play the role of perfectly insulating inclusions in the following. For an $A_2$-weight $w$, define the norms
	\begin{align}
		\norm{v}_{L^2(\widetilde{\Omega},w)}^2 &= \int_{\widetilde{\Omega}} w\abs{v}^2 \,\di x, \label{eq:L2weight}\\[1mm]
		\norm{v}_{H^1(\widetilde{\Omega},w)}^2 &= \norm{v}_{L^2(\widetilde{\Omega},w)}^2 + \norm{\nabla v}_{L^2(\widetilde{\Omega},w)}^2, \label{eq:H1weight}
	\end{align}
        where $\norm{\nabla v}_{L^2(\widetilde{\Omega},w)}$ refers to the $L^2(\widetilde{\Omega},w)$-norm of $\abs{\nabla v}$. The weighted spaces $L^2(\widetilde{\Omega},w)$ and $H^1(\widetilde{\Omega},w)$ are then defined as the completions of the spaces of $C^\infty(\widetilde{\Omega})$-functions with finite norms with respect to \eqref{eq:L2weight} and \eqref{eq:H1weight}. In particular, both $L^2(\widetilde{\Omega},w)$ and $H^1(\widetilde{\Omega},w)$ are Hilbert spaces. 
	
	By density and using \cite[Theorem~3.3]{Drelichman2008} with $\alpha = 1$, $p=q=2$, and the weight function $\sqrt{w}$, we arrive at the weighted Poincar\'e inequality
	\begin{equation} \label{eq:Poincare}
		\inf_{c\in\mathbb{C}}\norm{v-c}_{L^2(\widetilde{\Omega},w)} \leq C\norm{\nabla v}_{L^2(\widetilde{\Omega},w)}, \qquad v\in H^1(\widetilde{\Omega},w).
	\end{equation}
	In particular, the quotient space $H^1(\widetilde{\Omega},w)/\mathbb{C}$ can be equipped with the norm 
	\begin{equation*}
		\norm{v}_{H^1(\widetilde{\Omega},w)/\mathbb{C}} = \norm{\nabla v}_{L^2(\widetilde{\Omega},w)}, \qquad v\in H^1(\widetilde{\Omega},w)/\mathbb{C},
	\end{equation*}
    which is equivalent to the standard quotient norm of $H^1(\widetilde{\Omega},w)/\mathbb{C}$ due to \eqref{eq:Poincare}.

	In order to define a local ND map, we need to ensure that the elements of the considered space $H^1(\widetilde{\Omega},w)$ have well-defined Dirichlet traces. To this end, suppose there exist $K\Subset \Omega$ and $c\in(0,1)$ such that $\widetilde{\Omega}\setminus K$ is a Lipschitz domain and $c\leq w \leq c^{-1}$ almost everywhere in $\widetilde{\Omega}\setminus K$. This is sufficient for guaranteeing the existence of a bounded Dirichlet trace map from $H^1(\widetilde{\Omega},w)$ to $H^{1/2}(\partial\Omega)$, and thus also to $L^2(\Gamma)$: First, notice that if $v\in H^1(\widetilde{\Omega},w)$, then there is a sequence $(\phi_i)$ in $C^\infty(\widetilde{\Omega})$ such that $\phi_i\to v$ in $H^1(\widetilde{\Omega},w)$. Consequently, it also holds that $\phi_i|_{\widetilde{\Omega}\setminus K} \to v|_{\widetilde{\Omega}\setminus K}$ in $H^1(\widetilde{\Omega}\setminus K,w) = H^1(\widetilde{\Omega}\setminus K)$. Hence, we may apply the standard trace theorem for $H^1(\widetilde{\Omega}\setminus K)$ to deduce
	\begin{equation*}
		\norm{v|_{\Gamma}}_{L^2(\Gamma)}\leq \norm{v|_{\partial\Omega}}_{H^{1/2}(\partial\Omega)} \leq C\norm{v}_{H^1(\widetilde{\Omega}\setminus K)} \leq C\norm{v}_{H^1(\widetilde{\Omega},w)}.
	\end{equation*}
	Moreover,
	\begin{equation*}
		\norm{v|_{\Gamma}}_{L^2(\Gamma)/\mathbb{C}} \leq C\norm{v}_{H^1(\widetilde{\Omega},w)/\mathbb{C}}, \qquad v\in H^1(\widetilde{\Omega},w)/\mathbb{C},
	\end{equation*}
    due to an obvious generalisation.
	
	Finally, let $C_\infty \Subset \Omega\setminus C_0$ be the closure of an open set with Lipschitz boundary; in our analysis, $C_\infty$ corresponds to perfectly conducting inclusions. We define the considered conductivity coefficient as
	\begin{equation*}
		\sigma = \begin{cases}
			0 &\text{in } C_0,\\
			\infty &\text{in } C_\infty,\\
			w &\text{in } \Omega\setminus(C_0\cup C_\infty)
		\end{cases}
	\end{equation*}
	and the corresponding closed subspace of $H^1(\widetilde{\Omega},w)/\mathbb{C}$ via
	\begin{equation*}
		\mathcal{H}(\sigma) = \left\{ v\in H^1(\Omega\setminus C_0,w)/\mathbb{C} \mid \nabla v = 0 \text{ in } C_\infty^\circ \right\},
	\end{equation*}
	where $C_\infty^\circ$ denotes the interior of $C_\infty$. We abbreviate this Hilbert space as $\mathcal{H}$ if there is no room for misinterpretation. The induced norm on $\mathcal{H}$ is
	\begin{equation*}
		\norm{v}_{\mathcal{H}}^2 = \int_{\Omega\setminus(C_0\cup C_\infty)} \sigma\abs{\nabla v}^2\,\di x, \qquad v\in \mathcal{H}.
	\end{equation*}
	
	For a current density $f$ belonging to the $\Gamma$-mean free space
	\begin{equation*}
		L^2_\diamond(\Gamma) = \{ g\in L^2(\Gamma) \mid \inner{g,1}_{L^2(\Gamma)} = 0 \},
	\end{equation*}
	we define the electric potential $u$, corresponding to the conductivity coefficient $\sigma$, as the unique solution in $\mathcal{H}$ to the variational problem
	\begin{equation} \label{eq:variational}
		\int_{\Omega\setminus(C_0\cup C_\infty)} \sigma\nabla u\cdot\overline{\nabla v}\,\di x = \inner{f,v|_\Gamma}_{L^2(\Gamma)}, \qquad \forall v\in \mathcal{H},
	\end{equation}
	where there is no ambiguity in the right hand-side because of the mean free condition for $f$. The unique solvability of \eqref{eq:variational} is a straightforward consequence of the Lax--Milgram lemma. We will occasionally write $u = u_f^\sigma$ if the connection of $u$ to the specific Neumann boundary value $f$ and conductivity coefficient $\sigma$ needs to be emphasised. 
	
	Since the left hand-side of \eqref{eq:variational} defines a symmetric sesquilinear form on $\mathcal{H}$, $u$ is also the unique minimiser of the following functional (cf.,~e.g.,~\cite[Remark~12.23]{Grubb} and~\cite[Theorem~1.1.2]{Ciarlet1978}):
	\begin{equation} \label{eq:energy}
		J_\sigma(v) = \int_{\Omega\setminus(C_0\cup C_\infty)} \sigma\abs{\nabla v}^2\,\di x - 2\redel\inner{f,v|_\Gamma}_{L^2(\Gamma)}, \qquad v\in \mathcal{H}.
	\end{equation}
	The dependence on $\sigma$ in the notation $J_\sigma$ also encodes the extreme inclusions $C_0$ and $C_\infty$, as well as the domain $\mathcal{H}$ for the functional.
	
	The corresponding local ND map $\Lambda(\sigma)\in\mathscr{L}(L^2_\diamond(\Gamma))$, in the space of bounded linear operators on $L_\diamond^2(\Gamma)$, is defined as $\Lambda(\sigma)f = u_{f}^\sigma|_\Gamma$. Here the notation is slightly abused by denoting with $u_{f}^\sigma$ the unique $\Gamma$-mean free element in the corresponding equivalence class of the quotient space $\mathcal{H}$, which makes the definition of $\Lambda(\sigma)$ concordant with the material in \cite{Garde2020c}. In particular, $\Lambda(\sigma)$ is compact, self-adjoint, and satisfies
	\begin{equation} \label{eq:ND}
		\inner{\Lambda(\sigma)f,f}_{L^2(\Gamma)} = \int_{\Omega\setminus(C_0\cup C_\infty)} \sigma\abs{\nabla u_f^\sigma}^2\,\di x, \qquad f\in L^2_\diamond(\Gamma),
	\end{equation}
    which are also standard properties of ND maps for conductivity coefficients in $L^\infty_+(\Omega)$.

    \begin{remark}
	    We have decided to use \eqref{eq:variational} as the definition of the Neumann problem for the conductivity equation with $A_2$-Muckenhoupt weights and extreme inclusions. For the connection of \eqref{eq:variational} to the underlying partial differential equation and associated Neumann and Dirichlet boundary conditions on $\partial C_0$ and $\partial C_\infty$, respectively, we refer to \cite{Garde2020c}, where the situation is analysed for standard $L^\infty_+$-coefficients. In particular, we consider the direct employment of the variational problem \eqref{eq:variational} as the natural generalisation to the case of $A_2$-weights.
    \end{remark}

	\section{Proof of Theorem~\ref{thm:general}} \label{sec:proof}
	
	As we will see, the proof can essentially be reduced to showing that $\Lambda(\gamma_\textup{L})\geq \Lambda(\gamma) \geq \Lambda(\gamma_\textup{U})$ for certain modified variants of $\gamma$ satisfying $\gamma_\textup{L} \leq \gamma \leq \gamma_\textup{U}$. Specifically, for $D_{\textup{A}_2} = D_\textup{deg}\cup D_\textup{sing}$ we define
	\begin{equation*}
		\gamma_\textup{L} = \begin{cases}
			0 & \text{in } D_{\textup{A}_2}, \\
			\gamma & \text{in } \Omega\setminus D_{\textup{A}_2},
		\end{cases} 
		\qquad \gamma_\textup{U} = \begin{cases}
			\infty & \text{in } D_{\textup{A}_2}, \\
			\gamma & \text{in } \Omega\setminus D_{\textup{A}_2}.
		\end{cases} 
	\end{equation*}
	Notice that $\gamma_\textup{L}$ and $\gamma_\textup{U}$ satisfy the conditions required by \cite[Theorem~3.7]{Garde2020c} if one modifies its statement and proof to also allow $\gamma$ to approach $\gamma_0$ in a controlled manner (cf.~Assumption~\ref{assump:D}\eqref{item:final}). This slight generalisation of \cite[Theorem~3.7]{Garde2020c} is given below.
	
	\begin{proposition} \label{prop1}
	  Let $\gamma$ and $D$ satisfy Assumption~\ref{assump:D}, but with $D_\textup{deg}=D_\textup{sing}=\emptyset$, and let $\gamma_0$ satisfy the UCP. For $C\in \mathcal{A}$, it holds
		\begin{equation*}
			D\subseteq C \qquad \text{if and only if} \qquad \Lambda_0(C)\geq \Lambda(\gamma) \geq \Lambda_\infty(C).
		\end{equation*}
	\end{proposition}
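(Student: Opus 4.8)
The plan is to establish the two implications separately, following the architecture of the proof of \cite[Theorem~3.7]{Garde2020c} and modifying only those steps where the jump of $\gamma$ across $\partial D$ assumed there is relaxed to the controlled transition allowed by Assumption~\ref{assump:D}\eqref{item:final}. The engine throughout is the monotonicity inequality obtained from \eqref{eq:ND} and the minimisation characterisation of \eqref{eq:energy}, namely
\begin{equation} \label{eq:mono}
	\inner{(\Lambda(\sigma)-\Lambda(\tau))f,f}_{L^2(\Gamma)} \geq \int (\tau-\sigma)\,\abs{\nabla u_f^\tau}^2\,\di x, \qquad f\in L^2_\diamond(\Gamma),
\end{equation}
together with its extension to extreme inclusions from \cite{Garde2020c}, in which the infinite and vanishing parts of the coefficients are handled through the admissible spaces $\mathcal{H}(\sigma)$ rather than through the bare integrand in \eqref{eq:mono}.

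For the forward implication, suppose $D\subseteq C$. Writing $\sigma_C^0$ and $\sigma_C^\infty$ for the coefficients of $\Lambda_0(C)$ and $\Lambda_\infty(C)$, the hypothesis $D\subseteq C$ gives $\sigma_C^0 = 0 \leq \gamma$ and $\sigma_C^\infty = \infty \geq \gamma$ on $C$, while $\sigma_C^0 = \sigma_C^\infty = \gamma_0 = \gamma$ on $\Omega\setminus C$ because $\Omega\setminus C\subseteq\Omega\setminus D$. With the extended ordering $0\leq(\,\cdot\,)\leq\infty$ and the attendant nesting of the spaces $\mathcal{H}$, the monotonicity principle of \cite{Garde2020c} yields $\Lambda_0(C)\geq\Lambda(\gamma)\geq\Lambda_\infty(C)$ at once; this direction needs no change.

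For the reverse implication I would argue by contraposition, so assume $D\not\subseteq C$. Using that both $C$ and $D$ have connected complements together with Assumption~\ref{assump:D}\eqref{item:components}, one locates a point $x\in\partial D$ admitting a neighbourhood $W\subseteq\Omega\setminus C$, and Assumption~\ref{assump:D}\eqref{item:final} then supplies a relatively open $V\subseteq\widetilde D\cap W$ meeting $\partial D$ for a single inclusion type $\widetilde D\in\{D_0,D_\infty,D_\textup{F}^-,D_\textup{F}^+\}$. For $\widetilde D\in\{D_0,D_\infty\}$ the coefficient already jumps to $0$ or $\infty$ on $V$ and the argument is exactly that of \cite{Garde2020c}. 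The genuinely new cases are $\widetilde D=D_\textup{F}^-$ and $\widetilde D=D_\textup{F}^+$: here $\gamma-\gamma_0$ may decay to $0$ towards $\partial D$, but Assumption~\ref{assump:D}\eqref{item:final}(a) and (b) furnish an open ball $B\subseteq V\subseteq\Omega\setminus C$ on which $\gamma-\gamma_0$ is bounded away from $0$ with a fixed (negative, respectively positive) sign. This definite-sign ball $B$, even if it lies strictly inside $D$, plays precisely the role previously played by the jump region, so the rest of the case is unchanged. In every case the contradiction comes from localised potentials: applying \eqref{eq:mono} with $(\sigma,\tau)=(\gamma,\sigma_C^0)$ in the negative cases and $(\sigma,\tau)=(\sigma_C^\infty,\gamma)$ in the positive cases reduces matters to driving $\int_B\abs{\gamma-\gamma_0}\,\abs{\nabla u}^2$ to $+\infty$ for the pertinent potential $u$ while controlling the remaining contributions; invoking the UCP for $\gamma_0$, the localised potentials of \cite{Garde2020c} deliver a sequence $f_k\in L^2_\diamond(\Gamma)$ achieving exactly this, contradicting $\Lambda_0(C)\geq\Lambda(\gamma)$ or $\Lambda(\gamma)\geq\Lambda_\infty(C)$, respectively.

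The main obstacle I expect is in the reverse direction, in the interaction between the localised potentials and the extreme inclusions: one must steer the localising sequence to $B$ through the region where the coefficient equals $\gamma_0$ so that the UCP applies, while ensuring that the unbounded and vanishing parts of $\gamma$ outside $C$ do not corrupt the energy estimates — this is exactly where Assumption~\ref{assump:D}\eqref{item:components} and the space-based reading of \eqref{eq:mono} from \cite{Garde2020c} are required. Relative to \cite[Theorem~3.7]{Garde2020c}, however, all of this is already in place, and the only new verification is that the interior definite-sign ball $B$ produced by Assumption~\ref{assump:D}\eqref{item:final}(a),(b) can take over the role of the boundary jump in the probe argument, which amounts to a minor localisation adjustment.
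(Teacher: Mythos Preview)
Your proposal is correct and follows exactly the route of the paper's proof: the forward implication is taken verbatim from \cite[Theorem~3.7]{Garde2020c}, and for the reverse implication the only modification is replacing the boundary-jump ball by the definite-sign ball $B$ supplied by Assumption~\ref{assump:D}\eqref{item:final}(a),(b), after which parts (a)--(d) of the cited proof run unchanged. Your write-up simply spells out more of the mechanics than the paper does.
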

	
	\begin{proof}
		The first part of the proof for \cite[Theorem~3.7]{Garde2020c} remains identical in such a setting, whereas its second part must be slightly modified: when choosing the ball $B$ where potentials are localised in parts (a) and (b) of the proof, Assumption~\ref{assump:D}\eqref{item:final} guarantees that $B$ can be chosen such that $\gamma$ is bounded away from $\gamma_0$ in $B$, without requiring a jump discontinuity at the inclusion boundary as in the original assumptions for \cite[Theorem~3.7]{Garde2020c}.
	\end{proof}

    In what follows we denote $D_\textup{ext} = D_0 \cup D_\infty$. As the coefficients $\gamma_\textup{L}$ and $\gamma_\textup{U}$ purposefully do not have singular or degenerate parts, the corresponding potentials $u_f^{\gamma_\textup{L}}$ and $u_f^{\gamma_\textup{U}}$ are minimisers of the appropriate energy functionals defined by \eqref{eq:energy}, over the unweighted spaces $\mathcal{H}(\gamma_\textup{L})$ and $\mathcal{H}(\gamma_\textup{U})$, respectively. Moreover, $u_f^\gamma \in \mathcal{H}(\gamma)$ obviously satisfies $u_f^\gamma|_{\Omega\setminus (D_0\cup D_{\textup{A}_2})} \in \mathcal{H}(\gamma_\textup{L})$ and, on the other hand, $u_f^{\gamma_\textup{U}}  \in \mathcal{H}(\gamma)$ since it has vanishing gradient in the region where $\gamma$ may exhibit $A_2$ behaviour. These observations are employed below when manipulating the energy functionals associated to different conductivity coefficients.
        
	We start by proving $\Lambda(\gamma_\textup{L})\geq \Lambda(\gamma)$. Using \eqref{eq:variational} and \eqref{eq:ND}, we get
	\begin{align}
		-\inner{\Lambda(\gamma)f,f}_{L^2(\Gamma)} &= \int_{\Omega\setminus D_\textup{ext}} \gamma\abs{\nabla u_f^\gamma}^2\,\di x - 2\inner{f,u_f^\gamma|_\Gamma}_{L^2(\Gamma)} \notag\\
		&\geq \int_{\Omega\setminus (D_\textup{ext}\cup D_{\textup{A}_2})} \gamma_\textup{L}\abs{\nabla u_f^\gamma}^2\,\di x - 2\inner{f,u_f^\gamma|_\Gamma}_{L^2(\Gamma)} \notag\\
		&\geq \int_{\Omega\setminus (D_\textup{ext}\cup D_{\textup{A}_2})} \gamma_\textup{L}\abs{\nabla u_f^{\gamma_\textup{L}}}^2\,\di x - 2\inner{f,u_f^{\gamma_\textup{L}}|_\Gamma}_{L^2(\Gamma)} \notag\\[1mm]
		&= -\inner{\Lambda(\gamma_\textup{L})f,f}_{L^2(\Gamma)}, \label{eq:bndL}
	\end{align}
	where the $L^2(\Gamma)$ inner products have real values by \eqref{eq:variational} and \eqref{eq:ND}. The first inequality follows from the nonnegativity of the integrand and the fact that $\gamma$ equals $\gamma_\textup{L}$ outside $D_{\textup{A}_2}$, whereas the second one is a consequence of $u_f^{\gamma_\textup{L}}$ being the minimiser of $J_{\gamma_\textup{L}}$ in $\mathcal{H}(\gamma_\textup{L})$.
	
	Next we prove $\Lambda(\gamma)\geq \Lambda(\gamma_\textup{U})$:
	\begin{align}
		-\inner{\Lambda(\gamma_\textup{U})f,f}_{L^2(\Gamma)} &= \int_{\Omega\setminus (D_\textup{ext}\cup D_{\textup{A}_2})} \gamma_\textup{U}\abs{\nabla u_f^{\gamma_\textup{U}}}^2\,\di x - 2\inner{f,u_f^{\gamma_\textup{U}}|_\Gamma}_{L^2(\Gamma)} \notag\\
		&= \int_{\Omega\setminus D_\textup{ext}} \gamma\abs{\nabla u_f^{\gamma_\textup{U}}}^2\,\di x - 2\inner{f,u_f^{\gamma_\textup{U}}|_\Gamma}_{L^2(\Gamma)} \notag\\
		&\geq \int_{\Omega\setminus D_\textup{ext}} \gamma\abs{\nabla u_f^{\gamma}}^2\,\di x - 2\inner{f,u_f^{\gamma}|_\Gamma}_{L^2(\Gamma)} \notag\\[1mm]
		&= -\inner{\Lambda(\gamma)f,f}_{L^2(\Gamma)}, \label{eq:bndU}
	\end{align}
	where the second equality follows from $\abs{\nabla u_f^{\gamma_\textup{U}}}$ vanishing in $D_{\textup{A}_2}$, while the inequality is a consequence of $u_f^{\gamma}$ being the minimiser of $J_\gamma$ in $\mathcal{H}(\gamma)$.
	
	To prove the actual theorem, assume first $D\subseteq C$. Due to \eqref{eq:bndL}, \eqref{eq:bndU}, and Proposition~\ref{prop1} applied to  $\gamma_\textup{L}$ and $\gamma_\textup{U}$,
	\begin{equation}
          \label{eq:chainOpIneq}
		\Lambda_0(C) \geq \Lambda(\gamma_\textup{L}) \geq \Lambda(\gamma) \geq \Lambda(\gamma_\textup{U}) \geq \Lambda_\infty(C),
	\end{equation}
    which proves the direction ``$\Rightarrow$'' of the assertion.
	
    To prove the opposite direction ``$\Leftarrow$'', assume that $D\not \subseteq C$, i.e.,\ there is a part of $D\setminus C$ that can be connected to $\Gamma$ via a relatively open connected set $U\subset \Omega\setminus C$. By virtue of Assumption~\ref{assump:D}\eqref{item:final}, we may assume that $U$ only intersects one of the inclusion types $D_0, D_\infty, D_\textup{F}^-$, or $D_\textup{F}^+$. Proposition~\ref{prop1} does not directly reveal which one of its two operator inequalities that fails when applied to $\gamma_\textup{L}$ or $\gamma_\textup{U}$ in case $D\not \subseteq C$ (cf.~\eqref{eq:chainOpIneq}). However, according to \cite[Proof of Theorem~3.7]{Garde2020c}, this depends on the inclusion type that $U$ intersects, and we choose our approach to dealing with $\gamma$ accordingly: If (Case~A) the intersected inclusion type belongs to $D^-$, we will investigate $\Lambda_0(C) - \Lambda(\gamma)$, and, on the other hand, if (Case~B) the inclusion type belongs to $D^+$, we will investigate $\Lambda(\gamma) - \Lambda_\infty(C)$.
	
	We start by considering Case~A, meaning that the only part of $D$ that $U$ is chosen to intersect is either $D_0$ or $D_\textup{F}^-$. According to \eqref{eq:bndU},
	\begin{equation*}
		\Lambda_0(C) - \Lambda(\gamma_\textup{U}) \geq \Lambda_0(C) - \Lambda(\gamma).
	\end{equation*}
	However, by repeating either part (b) (if $U$ intersects $D_\textup{F}^-$) or part (d) (if $U$ intersects $D_0$) in \cite[Proof of Theorem~3.7]{Garde2020c}, we deduce $\Lambda_0(C)\not\geq \Lambda(\gamma_\textup{U})$, and therefore also $\Lambda_0(C)\not\geq \Lambda(\gamma)$.
	
	In Case~B, the only part of $D$ that $U$ is chosen to intersect is either $D_\infty$ or $D_\textup{F}^+$. According to~\eqref{eq:bndL},
	\begin{equation*}
		\Lambda(\gamma_\textup{L}) - \Lambda_\infty(C) \geq \Lambda(\gamma) - \Lambda_\infty(C).
	\end{equation*}
	However, by repeating either part (a) (if $U$ intersects $D_\textup{F}^+$) or part (c) (if $U$ intersects $D_\infty$) in \cite[Proof of Theorem~3.7]{Garde2020c}, we deduce $\Lambda(\gamma_\textup{L}) \not\geq \Lambda_\infty(C)$, and therefore also $\Lambda(\gamma) \not\geq \Lambda_\infty(C)$. \hfill\qed
    
    \subsection*{Acknowledgments}
    
    This work is supported by the Academy of Finland (decision 336789) and the Aalto Science Institute (AScI). In addition, HG is supported by The Research Foundation of DPhil Ragna Rask-Nielsen and is associated with the Aarhus University DIGIT Centre, and NH is supported by Jane and Aatos Erkko Foundation via the project Electrical impedance tomography --- a novel method for improved diagnostics of stroke.
    
	\bibliographystyle{plain}

\end{document}